\newcommand{\K}{\Bbb K}
\newcommand{\C}{\Bbb C}
\newcommand{\Z}{\Bbb Z}
\newcommand{\N}{\Bbb N}
\newcommand{\Q}{\Bbb Q}
\def\NL{\hfill\break}
\newcommand{\set}[1]{\left\{#1\right\}}
\theoremstyle{thmit} 
\newtheorem{thm}{Theorem}[section]
\newtheorem{lem}[thm]{Lemma}
\newtheorem{cor}[thm]{Corollary}
\newtheorem{prop}[thm]{Proposition}
\theoremstyle{thmrm} 
\newtheorem{exa}{Example}
\newtheorem{df}{Definition}
\newtheorem*{rk}{Remark}
\newtheorem*{oldproof}{Proof}
\renewenvironment{proof}[1][{}]{\begin{oldproof}[#1]}{\qed\end{oldproof}}
\begin{document}

\title{Ore extensions of principally quasi-Baer rings}
\maketitle

\begin{center}
{\bf Mohamed Louzari and L'moufadal Ben Yakoub}
\end{center}
{\small
\begin{center}
Dept.\ of Mathematics, Abdelmalek Essaadi University
\end{center}
\begin{center}
Faculty of sciences, B.P. 2121 Tetouan, Morocco
\end{center}
\begin{center}
mlouzari@yahoo.com, benyakoub@hotmail.com 
\end{center}}

\begin{abstract}Let $R$ be a ring and $(\sigma,\delta)$ a quasi-derivation of $R$. In this paper,
we show that if $R$ is an $(\sigma,\delta)$-skew Armendariz ring and satisfies the condition $(\mathcal{C_{\sigma}})$, then $R$ is right p.q.-Baer if and only if the Ore extension $R[x;\sigma,\delta]$ is right p.q.-Baer. As a consequence we obtain a generalization of \cite{hong/2000}.
\end{abstract}

\footnote[0]{

{2000 Mathematics Subject Classification.} 16S36.
\NL{Key words and phrases.} p.q.-Baer rings, Ore extensions,
$(\sigma,\delta)$-skew Armendariz rings, $(\sigma,\delta)$-compatible rings, $\sigma$-rigid rings.}

\section{Introduction}
Throughout this paper, $R$ denotes an associative ring with unity. For a subset $X$ of $R$, $r_R(X)=\{a\in R|Xa=0\}$ and $\ell_R(X)=\{a\in R|aX=0\}$ will stand for the right and the left annihilator of $X$ in $R$ respectively.
By Kaplansky \cite{Kaplansky}, a right annihilator of $X$ is always a right ideal, and if $X$ is a right ideal then $r_R(X)$ is a two-sided ideal. An Ore extension of a ring $R$ is denoted by $R[x;\sigma,\delta]$, where $\sigma$ is an endomorphism of $R$ and $\delta$ is a $\sigma$-derivation, i.e., $\delta\colon R\rightarrow R$ is an additive map such that $\delta(ab)=\sigma(a)\delta(b)+\delta(a)b$ for all $a,b\in R$ (the pair $(\sigma,\delta)$ is also called a quasi-derivation of $R$). Recall that elements of $R[x;\sigma,\delta]$ are polynomials in $x$ with coefficients written on the left. Multiplication in $R[x;\sigma,\delta]$ is given by the multiplication in $R$ and the condition $xa=\sigma(a)x+\delta(a)$, for all $a\in R$. We say that a subset $X$ of $R$ is $(\sigma,\delta)$-{\it stable} if
$\sigma(X)\subseteq X$ and $\delta(X)\subseteq X$. Recall that a ring $R$ is ({\it quasi})-{\it Baer} if the right
annihilator of every (right ideal) nonempty subset of $R$ is generated by an idempotent. Kaplansky \cite{Kaplansky}, introduced Baer rings to abstract various property of $AW^*$-algebras and Von Neumann algebras. Clark \cite{clark}, defined quasi-Baer rings and used them to characterize when a finite dimensional algebra with unity over an algebraically closed field is isomorphic to a twisted matrix units semigroup algebra. Another generalization of Baer rings are the p.p.-rings. A ring $R$ is a right (respectively, left) {\it p.p.-ring} if the right (respectively, left) annihilator of an element of $R$ is generated by an idempotent (right p.p.-rings are also known as the right Rickart rings). $R$ is called a {\it p.p.-ring} if it is both right and left p.p.-ring. Birkenmeier et al. \cite{birk/pqBaer}, introduced
principally quasi-Baer rings and used them to generalize many results on reduced p.p.-rings. A ring is called {\it right principally quasi-Baer} (or simply {\it right p.q.-Baer}) if the right annihilator of a principal right ideal is generated by an idempotent. Similarly, left p.q.-Baer rings can be defined. A ring $R$ is called {\it p.q.-Baer} if it is both right and left p.q.-Baer. For more details and examples of right p.q.-Baer rings, see Birkenmeier et al. \cite{birk/pqBaer}.\par From Birkenmeier et al. \cite{birk/polyExt}, an idempotent $e\in R$ is left (respectively, right) {\it
semicentral} in $R$ if $ere=re$ (respectively, $ere=er$), for all $r\in R$. Equivalently, $e^2=e\in R$ is left (respectively, right) semicentral if $eR$ (respectively, $Re$) is an ideal of $R$. Since the right annihilator of a right ideal is an ideal, we see that the right annihilator of a right ideal is generated by a left semicentral in a quasi-Baer (p.q.-Baer) ring. We use $\mathcal{S}_\ell(R)$ and $\mathcal{S}_r(R)$ for the sets of all left and right semicentral idempotents, respectively. Also note $\mathcal{S}_\ell(R)\cap \mathcal{S}_r(R)=\mathcal{B}(R)$, where $\mathcal{B}(R)$ is the set
of all central idempotents of $R$. If $R$ is a semiprime ring then $\mathcal{S}_\ell(R)=\mathcal{S}_r(R)=\mathcal{B}(R)$. Recall that $R$ is a {\it reduced} ring if it has no nonzero nilpotent elements. A ring $R$ is {\it abelian} if every idempotent of $R$ is central. We can easily observe that every reduced ring is abelian. \par According to Krempa \cite{krempa}, an endomorphism $\sigma$ of a ring $R$ is
called {\it rigid} if $a\sigma(a)=0$ implies $a=0$ for all $a\in R$. We call a ring $R$ $\sigma$-{\it rigid} if there exists a rigid endomorphism $\sigma$ of $R$. Note that any rigid endomorphism of a ring $R$ is a monomorphism and $\sigma$-rigid rings are reduced by Hong et al. \cite{hong/2000}. A ring $R$ is called {\it Armendariz} (respectively, $\sigma$-{\it skew Armendariz}) if whenever polynomials $f=\sum_{i=0}^{n}a_ix^i,\;g=\sum_{j=0}^{m}b_jx^j$ in $R[x]$ (respectively, $R[x;\sigma]$) satisfy $fg=0$ then $a_ib_j=0$ (respectively, $a_i\sigma^i(b_j)=0$) for each $i,j$. From Hashemi and Moussavi \cite{hashemi/skew}, a ring $R$ is called an $(\sigma,\delta)$-{\it skew Armendariz } ring if for $p=\sum_{i=0}^{n}a_ix^i$ and $q=\sum_{j=0}^{m}b_jx^j$ in $R[x;\sigma,\delta]$, $pq=0$ implies $a_ix^ib_jx^j=0$ for each $i,j$. Note that $(\sigma,\delta)$-skew Armendariz rings are generalization of $\sigma$-skew Armendariz rings, $\sigma$-rigid rings and Armendariz rings, see Hong et al. \cite{hong/2003}, for more details. Following Hashemi and Moussavi \cite{hashemi/quasi}, a ring $R$ is $\sigma$-{\it compatible} if for each $a,b\in R$, $a\sigma(b)=0\Leftrightarrow ab=0$. Moreover, $R$ is said to be $\delta$-{\it compatible} if for each $a,b\in R$, $ab=0\Rightarrow a\delta(b)=0$. If $R$ is both $\sigma$-compatible and $\delta$-compatible, we say that $R$ is $(\sigma,\delta)$-{\it compatible}.\par Birkenmeier et al. \cite[Theorem 3.1]{birk/OnpolyExt}, have proved that $R$ is right p.q.-Baer if and only if $R[x]$ is right p.q.-Baer. Hong et al. \cite[Corollary 15]{hong/2000}, have showed
that, if $R$ is $\sigma$-rigid, then $R$ is right p.q.-Baer if and only if $R[x;\sigma,\delta]$ is right p.q.-Baer. Also, Hashemi and Moussavi in \cite[Corollary 2.8]{hashemi/quasi}, have proved that under the $(\sigma,\delta)$-compatibility assumption on the ring $R$, $R$ is right p.q.-Baer if and only if $R[x;\sigma,\delta]$ is right p.q.Baer. \par In this paper, we prove that if $R$ is an $(\sigma,\delta)$-skew Armendariz ring and satisfies the condition $(\mathcal{C_{\sigma}})$ (see Definition \ref{def}), then $R$ is right p.q.-Baer if and only if the Ore extension $R[x;\sigma,\delta]$ is right p.q.-Baer. If  $R$ is a $\sigma$-rigid ring then $R$ is $(\sigma,\delta)$-skew Armendariz ring and satisfies the condition $(\mathcal{C_{\sigma}})$. So that we obtain a generalization of \cite[Corollary 15]{hong/2000}.

\section{Preliminaries and Examples}

For any $0\leq i\leq j\;(i,j\in \N)$, $f_i^j\in End(R,+)$ will denote the map which is the sum of all possible words in $\sigma,\delta$ built with $i$ letters $\sigma$ and $j-i$ letters $\delta$ (e.g., $f_n^n=\sigma^n$ and $f_0^n=\delta^n, n\in \N $). For any $n\in \N$ and $r\in R$ we have $x^nr=\sum_{i=0}^nf_i^n(r)x^i$ in the ring $R[x;\sigma,\delta]$ (see \cite[Lemma 4.1]{lam}).

\begin{lem}\label{lem2}Let $R$ be a ring and $a,b,c\in R$ such that $b\in
r_R(cR)=eR$ and $Re$ is $(\sigma,\delta)$-stable for some $e\in
\mathcal{S}_{\ell}(R)$. Then:\NL
$(i)$ $c\sigma(ab)=c\delta(ab)=0$;\NL $(ii)$ $cf_k^j(ab)=0$, for all
$0\leq k\leq j\;(k,j\in \N)$.
\end{lem}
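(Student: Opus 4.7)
My plan begins with two observations that reduce everything to a single stability statement. First, since $e\in\mathcal{S}_{\ell}(R)$, one has $re=ere\in eR$ for every $r\in R$, so $Re\subseteq eR$; in particular $eR$ is a two-sided ideal of $R$. Combined with $b\in eR$ (so $b=eb$), this gives $ab=a(eb)\in R(eR)\subseteq eR$. Second, the identification $eR=r_R(cR)$ means $(cR)y=0$ for every $y\in eR$; specializing to $1\in R$ yields $cy=0$, i.e.\ $c\cdot eR=0$. So once we know any element lies in $eR$, left multiplication by $c$ kills it.

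Given this, I would prove the stronger statement $f_k^j(ab)\in eR$ for all $0\le k\le j$, from which part (ii) follows immediately. Part (i) is then the particular case $j=1$, using $f_1^1=\sigma$ and $f_0^1=\delta$. The induction is on $j$; the base $j=0$ is the preliminary observation $ab\in eR$. For the inductive step I use the recursion
\[
f_k^j \;=\; \sigma\circ f_{k-1}^{j-1} \;+\; \delta\circ f_k^{j-1},
\]
obtained by comparing coefficients in $x\cdot x^{j-1}r=x^jr$ inside $R[x;\sigma,\delta]$ (with the convention $f_i^{j-1}=0$ outside $0\le i\le j-1$). So the whole induction reduces to checking that both $\sigma$ and $\delta$ map $eR$ into $eR$.

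This is where the hypothesis that $Re$ is $(\sigma,\delta)$-stable is used. Since $e=e\cdot e\in Re$, we have $\sigma(e),\delta(e)\in Re\subseteq eR$, hence $\sigma(e)=e\sigma(e)$ and $\delta(e)=e\delta(e)$. For any $u=eu\in eR$,
\[
\sigma(u)=\sigma(e)\sigma(u)=e\bigl(\sigma(e)\sigma(u)\bigr)\in eR,
\]
and applying the $\sigma$-derivation rule,
\[
\delta(u)=\delta(eu)=\sigma(e)\delta(u)+\delta(e)u=e\bigl(\sigma(e)\delta(u)\bigr)+e\bigl(\delta(e)u\bigr)\in eR.
\]
There is no serious obstacle; the only subtle point worth flagging is the order of the two hypotheses: one must first invoke left semicentrality of $e$ to obtain $Re\subseteq eR$, because only then does $(\sigma,\delta)$-stability of $Re$ translate into stability of $eR$ under $\sigma$ and $\delta$, which is what drives the induction.
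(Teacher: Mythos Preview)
Your proof is correct. The core observation---that left semicentrality of $e$ gives $Re\subseteq eR$, so that stability of $Re$ forces $\sigma(e),\delta(e)\in eR$---is exactly what the paper uses as well, and your elementwise verifications that $\sigma$ and $\delta$ preserve $eR$ are sound.

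The difference from the paper's argument is one of organization rather than substance. The paper proves (i) by direct computation: it writes $c\sigma(ab)=c\sigma(a)\sigma(e)\sigma(b)$, invokes $\sigma(e)=\sigma(e)e$ (i.e.\ $\sigma(e)\in Re$) to insert an $e$, and then kills the expression via $e\in r_R(cR)$; the $\delta$-case is handled analogously after expanding $\delta(aeb)$ by the Leibniz rule. For (ii) the paper simply says ``it follows from (i)''. You instead isolate the structural fact that $eR$ is a two-sided $(\sigma,\delta)$-stable ideal annihilated by $c$, and then run a clean induction on $j$ via the recursion $f_k^j=\sigma f_{k-1}^{j-1}+\delta f_k^{j-1}$. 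This buys you a uniform proof of (i) and (ii) at once and makes the passage to (ii) genuinely explicit, whereas the paper's ``follows from (i)'' leaves the reader to reconstruct precisely the induction you wrote out.
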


\begin{proof}$(i)$ $c\sigma(ab)=c\sigma(a)\sigma(b)$, but $b=eb$, so
$c\sigma(ab)=c\sigma(a)\sigma(e)\sigma(b)$, since
$\sigma(e)=\sigma(e)e$. Then
$c\sigma(ab)=c\sigma(a)\sigma(e)e\sigma(b)=0$, because $e\in
r_R(cR)$. Also
$c\delta(ab)=c\delta(aeb)=c\sigma(ae)\delta(b)+c\delta(ae)b$, but
$c\delta(ae)b=0$. So
$c\delta(ab)=c\sigma(ae)\delta(b)=c\sigma(a)\sigma(e)e\delta(b)=0$.\NL$(ii)$
It follows from $(i)$.
\end{proof}

\begin{df}\label{def}Let $\sigma$ be an endomorphism of a ring $R$. We say that $R$
satisfies the condition $(\mathcal{C_{\sigma}})$ if whenever $a\sigma(b)=0$ with $a,b\in R$, then $ab=0$.
\end{df}

\begin{lem}\label{lem5}Let $\sigma$ be an endomorphism of a ring $R$.
The following are equivalent:
\NL$(i)$ $R$ satisfies $(\mathcal{C_{\sigma}})$ and reduced;
\NL$(ii)$ $R$ is $\sigma$-rigid.
\end{lem}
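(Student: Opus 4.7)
The plan is to prove both implications directly from the definitions, using reducedness to toggle between left and right annihilation.

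For (ii) $\Rightarrow$ (i), I would first recall the fact (attributed to Hong et al.\ earlier in the paper) that every $\sigma$-rigid ring is reduced, which immediately gives the reducedness half. To verify $(\mathcal{C}_\sigma)$, suppose $a\sigma(b)=0$. The trick is to form $ba$ and compute
\[
(ba)\sigma(ba) \;=\; b\bigl(a\sigma(b)\bigr)\sigma(a) \;=\; 0,
\]
so $\sigma$-rigidity forces $ba=0$. Since $R$ is reduced, $(ab)^2=a(ba)b=0$ and reducedness yields $ab=0$, as required.

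For (i) $\Rightarrow$ (ii), assume $R$ is reduced and satisfies $(\mathcal{C}_\sigma)$. If $a\sigma(a)=0$, then applying $(\mathcal{C}_\sigma)$ with $b=a$ gives $a^2=0$, and reducedness yields $a=0$. Thus $\sigma$ is a rigid endomorphism.

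There is no real obstacle here: the argument is a short manipulation, the only nontrivial move being the one that converts $a\sigma(b)=0$ into a relation of the form $x\sigma(x)=0$, which is supplied by multiplying on the left by $b$ and on the right by $\sigma(a)$. Everything else is bookkeeping with the fact that in a reduced ring left and right annihilators coincide.
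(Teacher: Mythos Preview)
Your proof is correct and follows essentially the same approach as the paper: in both directions you use the same key step of passing from $a\sigma(b)=0$ to $(ba)\sigma(ba)=0$ to invoke rigidity, and the same application of $(\mathcal{C}_\sigma)$ with $b=a$ for the other implication. If anything, you spell out more carefully than the paper why $ba=0$ forces $ab=0$ in a reduced ring.
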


\begin{proof}Let $a\in R$ such that $a\sigma(a)=0$ then $a^2=0$, since $R$ is reduced so
$a=0$. Conversely, let $a,b\in R$ such that $a\sigma(b)=0$ then
$ba\sigma(ba)=0$, since $R$ is $\sigma$-rigid (so reduced) then
$ba=ab=0$.
\end{proof}

\begin{lem}\label{lemma1}Let $R$ be a ring, $\sigma$ an endomorphism of $R$
and $\delta$ be a $\sigma$-derivation of $R$. If $R$ is
$(\sigma,\delta)$-compatible. Then for $a,b\in R$, $ab=0$ implies
$af_i^j(b)=0$ for all $j\geq i\geq 0$.
\end{lem}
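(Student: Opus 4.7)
The plan is to exploit the explicit description of $f_i^j$ as a sum of words of length $j$ in the letters $\sigma$ and $\delta$ (with exactly $i$ occurrences of $\sigma$), and to show by induction on the length that every such word preserves the left-annihilation of $b$ by $a$. The key point is that $(\sigma,\delta)$-compatibility is precisely the statement that applying $\sigma$ or $\delta$ to the right factor of a zero product yields another zero product.

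First I would reduce to words. By definition, $f_i^j$ is a finite sum of compositions $w=\tau_j\circ\tau_{j-1}\circ\cdots\circ\tau_1$ with each $\tau_k\in\set{\sigma,\delta}$. Since the conclusion $af_i^j(b)=0$ is additive in the terms of this sum, it suffices to prove that $aw(b)=0$ for every such word $w$ of length $j$.

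Next I would induct on the length $j$ of the word. For $j=0$, the empty word is the identity and $ab=0$ is the hypothesis. For the inductive step, write $w=\tau_j\circ w'$ with $w'$ a word of length $j-1$, and assume $aw'(b)=0$. If $\tau_j=\sigma$, then $\sigma$-compatibility applied to the product $aw'(b)=0$ gives $a\sigma(w'(b))=0$, i.e.\ $aw(b)=0$. If $\tau_j=\delta$, then $\delta$-compatibility applied to $aw'(b)=0$ gives $a\delta(w'(b))=0$, i.e.\ $aw(b)=0$. This closes the induction, and summing over all words with $i$ letters $\sigma$ and $j-i$ letters $\delta$ yields $af_i^j(b)=0$.

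There is essentially no serious obstacle here: the statement is a direct iteration of the two one-step compatibility implications, and the combinatorial description of $f_i^j$ already given after Lemma \ref{lem2} does all the bookkeeping. The only thing to be careful about is to apply $\sigma$-compatibility in the forward direction ($ac=0\Rightarrow a\sigma(c)=0$), which is one half of the biconditional in the definition of $\sigma$-compatibility, together with the forward implication for $\delta$; no converse of $\delta$-compatibility is needed.
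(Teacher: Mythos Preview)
Your proof is correct and follows the same idea as the paper's, namely iterating the one-step compatibility implications through the words in $\sigma$ and $\delta$ that constitute $f_i^j$. The paper's argument is stated more tersely---it records only the pure powers $a\sigma^i(b)=a\delta^j(b)=0$ before concluding---whereas your induction on word length makes the passage to arbitrary mixed words explicit.
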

\begin{proof}If $ab=0$, then $a\sigma^i(b)=a\delta^j(b)=0$ for all
$i\geq0$ and $j\geq 0$, because $R$ is $(\sigma,\delta)$-compatible.
Then $af_i^j(b)=0$ for all $i,j$.
\end{proof}

There is an example of a ring $R$ and an endomorphism $\sigma$ of
$R$ such that $R$ is $\sigma$-skew Armendariz and $R$ is not
$\sigma$-compatible.

\begin{exa}\label{ex2.1}Consider a ring of polynomials over $\Z_2$,
$R=\Z_2[x]$. Let $\sigma\colon R\rightarrow R$ be an endomorphism
defined by $\sigma(f(x))=f(0)$. Then: \NL $(i)$ $R$ is not
$\sigma$-compatible. Let $f=\overline{1}+x$, $g=x\in R$, we have
$fg=(\overline{1}+x)x\neq 0$, however
$f\sigma(g)=(\overline{1}+x)\sigma(x)=0$.\NL $(ii)$ $R$ is
$\sigma$-skew Armendariz \cite[Example~5]{hong/2003}.
\end{exa}

In the next example, $S=R/I$ is a ring and $\overline{\sigma}$ an
endomorphism of $S$ such that $S$ is $\overline{\sigma}$-compatible
and not $\overline{\sigma}$-skew Armendariz.

\begin{exa}\label{ex4} Let $\Z$ be the ring of integers and $\Z_4$ be
the ring of integers modulo 4. Consider the ring
$$R=\set{\begin{pmatrix}
  a & \overline{b} \\
  0 & a
\end{pmatrix}| a\in\Z\;,\overline{b}\in \Z_4}.$$  Let $\sigma\colon R\rightarrow R$ be an
endomorphism defined by $\sigma\left(\begin{pmatrix}
  a & \overline{b} \\
  0 & a
\end{pmatrix}\right)=\begin{pmatrix}
  a & -\overline{b} \\
  0 & a
\end{pmatrix}$.\NL Take the ideal $I=\set{\begin{pmatrix}
  a & \overline{0} \\
  0 &a
\end{pmatrix}| a\in 4\Z}$ of $R$. Consider the factor ring $$R/I\cong\set{\begin{pmatrix}
  \overline{a} & \overline{b} \\
  0 &\overline{a}
\end{pmatrix}| \overline{a},\overline{b}\in 4\Z}.$$ \NL$(i)$ $R/I$ is not
$\overline{\sigma}$-skew Armendariz. In fact, $\left(\begin{pmatrix}
 \overline{2} & \overline{0} \\
  0 & \overline{2}
\end{pmatrix}+\begin{pmatrix}
 \overline{2} & \overline{1} \\
  0 & \overline{2}
\end{pmatrix}x\right)^2=0\in(R/I)[x;\overline{\sigma}]$, but $\begin{pmatrix}
 \overline{2} & \overline{1} \\
  0 & \overline{2}
\end{pmatrix}\overline{\sigma}\begin{pmatrix}
 \overline{2} & \overline{0} \\
  0 & \overline{2}
\end{pmatrix}\neq 0$.\NL$(ii)$ $R/I$ is
$\overline{\sigma}$-compatible. Let $A=\begin{pmatrix}
 \overline{a} & \overline{b} \\
  0 & \overline{a}
\end{pmatrix}\;,B=\begin{pmatrix}
 \overline{a'} & \overline{b'} \\
  0 & \overline{a'}
\end{pmatrix}\in R/I$. If $AB=0$ then $\overline{aa'}=0$ and
$\overline{ab'}=\overline{ba'}=0$, so that
$A\overline{\sigma}(B)=0$. The same for the converse. Therefore
$R/I$ is $\overline{\sigma}$-compatible.
\end{exa}

\begin{exa}\label{ex2.2}Consider the ring $$R=\set{\begin{pmatrix}
  a & t \\
  0 & a
\end{pmatrix}| a\in \Z\;,t\in \Q},$$ where $\Z$ and $\Q$ are the
set of all integers and all rational numbers, respectively. The ring
$R$ is commutative, let $\sigma\colon R\rightarrow R$ be an
automorphism defined by $\sigma\left(\begin{pmatrix}
  a & t \\
  0 & a
\end{pmatrix}\right)=\begin{pmatrix}
  a & t/2 \\
  0 & a
\end{pmatrix}$.\NL
$(i)$ $R$ is not $\sigma$-rigid. $\begin{pmatrix}
  0 & t \\
  0 & 0
\end{pmatrix}\sigma\left(\begin{pmatrix}
  0 & t \\
  0 & 0
\end{pmatrix}\right)=0$, but $\begin{pmatrix}
  0 & t \\
  0 & 0
\end{pmatrix}\neq 0$, if $t\neq 0$.\NL $(ii)$ $\sigma(Re)\subseteq
Re$ for all $e\in \mathcal{S}_{\ell}(R)$. $R$ has only two
idempotents, $e_0=\begin{pmatrix}
  0 & 0 \\
  0 & 0
\end{pmatrix}$ end $e_1=\begin{pmatrix}
 1 & 0 \\
  0 & 1
\end{pmatrix}$, let $r=\begin{pmatrix}
  a & t \\
  0 & a
\end{pmatrix}\in R$, we have $\sigma(re_0)\in Re_0$ and $\sigma(re_1)\in
Re_1$. \NL $(iii)$ $R$ satisfies the condition $(\mathcal{C_\sigma})$. Let $\left(%
\begin{array}{cc}
  a & t \\
  0 & a \\
\end{array}%
\right)$ and $\left(%
\begin{array}{cc}
  b & x \\
  0 & b \\
\end{array}%
\right)\in R$ such that $$\left(%
\begin{array}{cc}
  a & t \\
  0 & a \\
\end{array}%
\right)\sigma\left(\left(%
\begin{array}{cc}
  b & x \\
  0 & b \\
\end{array}%
\right)\right)=0,$$ hence $ab=0=ax/2+tb$, so $a=0$ or $b=0$. In each
case,
$ax+tb=0$, hence $\left(%
\begin{array}{cc}
  a & t \\
  0 & a \\
\end{array}%
\right)\left(%
\begin{array}{cc}
  b & x \\
  0 & b \\
\end{array}%
\right)=0$. Therefore $R$ satisfies $(\mathcal{C_\sigma})$.
\NL $(iv)$ $R$ is $\sigma$-skew Armendariz \cite[Example 1]{hong/2000}.
\end{exa}

\section{Ore extensions over right p.q.-Baer rings}

The principally quasi-Baerness of a ring $R$ do not inherit the Ore extensions of $R$. The following example
shows that, there exists an endomorphism $\sigma$ of a ring $R$ such that $R$ is right p.q.-Baer, $Re$ is $\sigma$-stable for all $e\in \mathcal{S_{\ell}}(R)$ and not satisfying $(\mathcal{C_{\sigma}})$, but
$R[x;\sigma]$ is not right p.q.-Baer.

\begin{exa}\label{ex3.1}Let $\K$ be a field and $R=\K[t]$ a polynomial ring over $\K$
with the endomorphism $\sigma$ given by $\sigma(f(t))=f(0)$  for all
$f(t)\in R$. Then $R$ is a principal ideal domain so right
p.q.-Baer.\NL $(i)$ $R[x;\sigma]$ is not right p.q.-Baer. Consider a
right ideal $xR[x;\sigma]$. Then
$x\{f_0(t)+f_1(t)x+\cdots+f_n(t)x^n\}=f_0(0)x+f_1(0)x^2+\cdots+f_n(0)x^{n+1}$
for all $f_0(t)+f_1(t)x+\cdots+f_n(t)x^n\in R[x;\sigma]$ and hence
$xR[x;\sigma]=\{a_1x+a_2x^2+\cdots+a_nx^n|\;n\in\N,\;
a_i\in\K\;(i=0,1,\cdots,n)\}$. Note that $R[x;\sigma]$ has only two
idempotents 0 and 1 by simple computation. Since
$(a_1x+a_2x^2+\cdots+a_nx^n)1=a_1x+a_2x^2+\cdots+a_nx^n\neq 0$ for
some nonzero element $a_1x+a_2x^2+\cdots+a_nx^n\in xR[x;\sigma]$, we
get $1\notin r_{R[x;\sigma]}(xR[x;\sigma])$ and so
$r_{R[x;\sigma]}(xR[x;\sigma])\neq R[x;\sigma]$. Also, since
$(a_1x+a_2x^2+\cdots+a_nx^n)t=0$ for all
$a_1x+a_2x^2+\cdots+a_nx^n\in xR[x;\sigma],\; t\in
r_{R[x;\sigma]}(xR[x;\sigma])$ and hence
$r_{R[x;\sigma]}(xR[x;\sigma])\neq 0$. Thus
$r_{R[x;\sigma]}(xR[x;\sigma])$ is not generated by an idempotent.
Therefore $R[x;\sigma]$ is not a right p.q.-Baer ring, \cite[Example
2.8]{Birk/someskew}. \NL$(ii)$ $R$ does not satisfy the condition
$(\mathcal{C_{\sigma}})$. Take $f=a_0+a_1t+a_2t^2+\cdots+a_nt^n$ and
$g=b_1t+b_2t^2+\cdots+b_mt^m$, since $g(0)=0$ so, $f\sigma(g)=0$,
but $fg\neq 0$.\NL$(iii)$ $R$ has only two idempotents $0$ and $1$
so $Re$ is $\sigma$-stable for all $e\in\mathcal{S}_\ell(R)$.
\end{exa}

\begin{prop}\label{proprincipalart2}Let $R$ be a ring and $(\sigma,\delta)$ a quasi-derivation of $R$.
Assume that $Re$ is $(\sigma,\delta)$-stable for all $e\in\mathcal{S}_\ell(R)$ and $R$
satisfies the condition $(\mathcal{C_\sigma})$. If $R$ is right p.q.-Baer then so is
$R[x;\sigma,\delta]$.
\end{prop}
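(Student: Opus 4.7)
The plan is to identify, for any $p \in R[x;\sigma,\delta]$, an idempotent of $R$ that generates the right annihilator $r_{R[x;\sigma,\delta]}(pR[x;\sigma,\delta])$; the candidate is $e = e_0 e_1 \cdots e_n$, where $p = \sum_{i=0}^n a_i x^i$ and each $e_i \in \mathcal{S}_\ell(R)$ is chosen so that $r_R(a_i R) = e_i R$ (these $e_i$ exist since $R$ is right p.q.-Baer).

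First I would verify the auxiliary properties of $e$. Since each $e_i$ is left semicentral, the ideal $e_iR$ is two-sided, so $eR = \bigcap_{i=0}^n e_iR$ is an ideal, $e$ is itself a left semicentral idempotent, and, crucially, $eR$ is $(\sigma,\delta)$-stable. The stability uses the hypothesis on $Re_i$: for any $e_i r \in e_iR$ one has $\sigma(e_i r) = \sigma(e_i)\sigma(r)$ with $\sigma(e_i) \in Re_i \subseteq e_iR$, and since $e_iR$ is two-sided this product again lies in $e_iR$; the same argument handles $\delta$. This ensures that Lemma~\ref{lem2} applies freely whenever one encounters the Lemma's $c = a_i$ and some $b \in e_iR$.

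For the inclusion $eR[x;\sigma,\delta] \subseteq r(pR[x;\sigma,\delta])$, I take $q = \sum_j b_j x^j$ with $b_j \in eR$ and verify that $p \cdot s \cdot q = 0$ for every $s \in R[x;\sigma,\delta]$; by linearity it suffices to test monomials $s = cx^k$. Expanding via $x^i r = \sum_l f_l^i(r) x^l$ writes $p(cx^k)q$ as a sum of terms $a_i f_l^i\!\bigl(c\, f_{m'}^k(b_j)\bigr) x^{l+m'+j}$. Because $f_{m'}^k(b_j) \in eR \subseteq e_iR$ (by the stability of $eR$), Lemma~\ref{lem2} applied with its $c = a_i$, $a = c$, $b = f_{m'}^k(b_j)$ kills every such term.

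For the reverse inclusion I must show $a_iRb_j = 0$ for every pair $(i,j)$, where $q = \sum b_j x^j$ annihilates $pR[x;\sigma,\delta]$. The base case comes from the top coefficient of $pcq = 0$, which reads $a_n \sigma^n(cb_{\deg q}) = 0$; iterating $(\mathcal{C}_\sigma)$ on the product $cb_{\deg q}$ yields $a_n c b_{\deg q} = 0$ for all $c$, so $b_{\deg q} \in e_nR$. The general statement is then extracted by a descending induction: at each stage one analyses a suitable coefficient of $p(cx^K)q = 0$, uses Lemma~\ref{lem2} to discard the contributions already known to vanish, and isolates a single relation of the form $a_i\sigma^i(c)\sigma^{i+K}(b_j) = 0$, from which $(\mathcal{C}_\sigma)$ gives $a_icb_j = 0$. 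The main obstacle is precisely this reverse inclusion: mid-level coefficients of $p(cx^K)q$ couple distinct $b_j$'s together, so the induction must be sequenced very carefully to keep every annihilation fact used already established. The two hypotheses play complementary roles---$(\mathcal{C}_\sigma)$ for peeling $\sigma$'s off products, and $(\sigma,\delta)$-stability of $Re$ to feed Lemma~\ref{lem2}---and the freedom to vary $K$ supplies the extra degree of freedom needed to decouple the equations.
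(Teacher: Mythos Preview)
Your overall architecture matches the paper's: take $e=e_ne_{n-1}\cdots e_0$ with $r_R(a_iR)=e_iR$, prove $eS\subseteq r_S(pS)$ via Lemma~\ref{lem2}, and then show every $q=\sum b_jx^j$ in the annihilator has $b_j\in eR$. Your forward inclusion and base case ($a_nRb_m=0$ from the top coefficient and $(\mathcal{C}_\sigma)$) are exactly the paper's.

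The gap is in the inductive decoupling step. After Lemma~\ref{lem2} strips away the already-known vanishing contributions, you are still left with a sum such as
\[
a_n\sigma^n(cb_{m-1})+a_{n-1}\sigma^{n-1}(cb_m)=0
\]
(two unknowns coupled). The paper does \emph{not} vary the power $K$ in $p(cx^K)q$; it works only with scalars $c\in R$ and instead decouples by the substitution $c=se_n$, then $c=se_ne_{n-1}$, and so on. The point is that once $b_m\in e_nR$, the product $e_nb_{m-1}\in e_nR=r_R(a_nR)$, so Lemma~\ref{lem2} kills the first term for this choice of $c$, isolating $a_{n-1}\sigma^{n-1}(se_nb_m)=0$, whence $a_{n-1}sb_m=0$ via $(\mathcal{C}_\sigma)$ and $e_nb_m=b_m$. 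This idempotent-substitution trick is the engine of the whole induction, and it is absent from your sketch.

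Your proposed alternative---letting $K$ vary---does not obviously supply the same leverage. Passing from $K=0$ to $K=1$ replaces the coupled relation above by $a_n\sigma^n(c\sigma(b_{m-1}))+a_{n-1}\sigma^{n-1}(c\sigma(b_m))=0$, which is just the image of the original relation and carries no new information; there is no linear-algebraic way to separate the two terms from these alone. So as written the reverse inclusion is not established. The fix is precisely the paper's substitution device.
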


\begin{proof}The idea of proof is similar to that  of \cite[Theorem 3.1]{birk/OnpolyExt}.
Let $S=R[x;\sigma,\delta]$ and
$p(x)=c_0+c_1x+\cdots+c_nx^n\in S$. There is
$e_i\in\mathcal{S}_\ell(R)$ such that $r_R(c_iR)=e_iR$, for
$i=0,1,\cdots,n$. Let $e=e_ne_{n-1}\cdots e_0$, then
$e\in\mathcal{S}_\ell(R)$ and
$eR=\bigcap_{i=0}^nr_R(c_iR)$.
\NL{\bf Claim 1}. $eS\subseteq r_S(p(x)S)$.
\NL Let $\varphi(x)=a_0+a_1x+\cdots+a_mx^m\in
S$, we have
$$p(x)\varphi(x)e=(\sum_{i=0}^nc_ix^i)(\sum_{k=0}^m(\sum_{j=k}^
m a_kf_k^j(e)x^k)),$$ since $Re$ is $f_k^j$-stable $(0\leq k\leq j)$,
we have $f_k^j(e)=u_k^je$ for some $u_k^j\in R$ $(0\leq k\leq j)$.
So
$p(x)\varphi(x)e=(\sum_{i=0}^nc_ix^i)(\sum_{k=0}^m(\sum_{j=k}^
m a_ku_k^je)x^k)$, if we set $\sum_{j=k}^ma_ku_k^j=\alpha_k$,
then $$
 p(x)\varphi(x)e= (\sum_{i=0}^nc_ix^i)(\sum_{k=0}^ m
\alpha_kex^k)=\sum_{i=0}^n\sum_{k=0}^m(c_i\sum_{j=0}^i
f_j^i(\alpha_ke))x^{j+k},$$ but $eR\subseteq r_R(c_iR)$, for
$i=0,1,\cdots\,n$. So $p(x)\varphi(x)e=0$. Therefore $eS\subseteq
r_S(p(x)S)$.
\NL{\bf Claim 2}. $r_S(p(x)R)\subseteq eS$.\NL Let
$\varphi(x)=a_0+a_1x+\cdots+a_mx^m\in r_S(p(x)R)$. Since
$p(x)R\varphi(x)=0$, we have $p(x)b\varphi(x)=0$ for all $b\in R$.
Thus $$\sum_{i=0}^n\sum_{k=0}^m(c_i\sum_{j=0}^if_j^i(ba_k))x^{j+k}=0.$$
So, we have the following system of equations:
$$c_n\sigma^n(ba_m)=0;\eqno(0)$$

$$c_n\sigma^n(ba_{m-1})+c_{n-1}\sigma^
{n-1}(ba_m)+c_nf_{n-1}^n(ba_m)=0;\eqno(1)$$

$$c_n\sigma^n(ba_{m-2})+c_{n-1}\sigma^{n-1}(ba_{m-1})+
c_nf_{n-1}^n(ba_{m-1})+c_{n-2}\sigma^{n-2}(ba_m)\eqno(2)$$ $$+
c_{n-1}f_{n-2}^{n-1} (ba_m)+c_nf_{n-2}^n(ba_m)=0;$$

$$c_n\sigma^n(ba_{m-3})+c_{n-1}\sigma^{n-1}(ba_{m-2})+
c_nf_{n-1}^n(ba_{m-2})+c_{n-2}\sigma^{n-2}(ba_{m-1})\eqno(3)$$
$$+c_{n-1}f_{n-2}^{n-1}(ba_{m-1})+c_nf_{n-2}^n(ba_{m-1})+c_{n-3}
\sigma^{n-3}(ba_m)+c_{n-2}f_{n-3}^{n-2}(ba_m)$$
$$+c_{n-1}f_{n-3}^{n-1}(ba_m)+c_nf_{n-3}^n(ba_m)=0;$$
$$\cdots$$
$$\sum_{j+k=\ell}\;\;\sum_{i=0}^n\;
\sum_{k=0}^m(c_i\sum_{j=0}^if_j^i(ba_k))=0;\eqno(\ell)$$
$$\cdots$$ $$\sum_{i=0}^nc_i\delta^i(ba_0)=0.\eqno(n+m)$$
From  eq. $(0)$, we have $c_nba_m=0$ then $a_m\in r_R(c_nR)=e_nR$.
Since $c_nba_m=0$, so in eq. $(1)$, by Lemma \ref{lem2}, we have
$c_nf_{n-1}^n(ba_m)=0$ and eq. $(1)$ simplifies to
$$c_n\sigma^n(ba_{m-1})+c_{n-1}\sigma^
{n-1}(ba_m)=0.\eqno(1')$$ Let $s\in R$ and take $b=se_n$ in eq.
$(1')$. Then $$c_n\sigma^n(se_na_{m-1})+c_{n-1}\sigma^
{n-1}(se_na_m)=0.$$ But $c_nse_na_{m-1}=0$, so
$c_n\sigma^n(se_na_{m-1})=0$. Thus $c_{n-1}\sigma^
{n-1}(se_na_m)=0$, so $c_{n-1}se_na_m=0$ but $e_na_m=a_m$, the eq.
$(1')$ yields $c_{n-1}sa_m=0$. Hence $a_m\in r_R(c_{n-1}R)$, thus
$a_m\in e_ne_{n-1}R$ and so $c_n\sigma^n(ba_{m-1})=0$, so
$c_nba_{m-1}=0$, thus $a_{m-1}\in e_nR=r_R(c_nR)$.\par Now in eq.
$(2)$, since $c_nba_{m-1}=c_{n-1}ba_m=c_nb_m=0$, because $a_m\in
r_R(c_nR)\cap r_R(c_{n-1}R)$ and $a_{m-1}\in r_R(c_nR)$. By Lemma
\ref{lem2}, we have
$$c_nf_{n-1}^n(ba_{m-1})=c_{n-1}f_{n-2}^{n-1}
(ba_m)=c_nf_{n-2}^n(ba_m)=0,$$ because $a_m\in e_ne_{n-1}R$ and
$a_{m-1}\in e_nR$. So, eq. $(2)$, simplifies to
$$c_n\sigma^n(ba_{m-2})+c_{n-1}\sigma^{n-1}(ba_{m-1})+c_{n-2}
\sigma^{n-2}(ba_m)=0.\eqno(2')$$ In eq. $(2')$, take
$b=se_ne_{n-1}$. Then
$$c_n\sigma^n(se_ne_{n-1}a_{m-2})+c_{n-1}\sigma^{n-1}(se_ne_{n-1}a_{m-1})+c_{n-2}
\sigma^{n-2}(se_ne_{n-1}a_m)=0.$$ But
$$c_n\sigma^n(se_ne_{n-1}a_{m-2})=c_{n-1}\sigma^{n-1}(se_ne_{n-1}a_{m-1})=0.$$
Hence $c_{n-2} \sigma^{n-2}(se_ne_{n-1}a_m)=0$, so
$c_{n-2}se_ne_{n-1}a_m=c_{n-2}sa_m=0$, thus $a_m\in
r_R(c_{n-2}R)=e_{n-2}R$ and so $a_m\in e_ne_{n-1}e_{n-2}R$. The eq.
$(2')$ becomes
$$c_n\sigma^n(ba_{m-2})+c_{n-1}\sigma^{n-1}(ba_{m-1})=0.\eqno(2'')$$
Take $b=se_n$ in eq. $(2'')$, so
$c_n\sigma^n(se_na_{m-2})+c_{n-1}\sigma^{n-1}(se_na_{m-1})=0,$ then
$c_{n-1}\sigma^{n-1}(se_na_{m-1})=0$ because
$c_n\sigma^n(se_na_{m-2})=0$, thus
$$c_{n-1}se_na_{m-1}=c_{n-1}sa_{m-1}=0,$$ then $a_{m-1}\in
r_R(c_{n-1}R)=e_{n-1}R$ and so $a_{m-1}\in e_ne_{n-1}R$. From eq.
$(2'')$, we obtain also $c_n\sigma^n(ba_{m-2})=0=c_nba_{m-2}$, so
$a_{m-2}\in e_nR$.\NL Summarizing at this point, we have $$a_m\in
e_ne_{n-1}e_{n-2}R,\quad a_{m-1}\in
e_ne_{n-1}R\quad\mathrm{and}\;a_{m-2}\in e_nR.$$\par Now in eq.
$(3)$, since
$$c_nba_{m-2}=c_{n-1}ba_{m-1}=c_nba_{m-1}=c_{n-2}ba_m=c_{n-1}ba_m=c_nba_m=0.$$
because $a_m\in r_R(c_nR)\cap r_R(c_{n-1}R)\cap r_R(c_{n-2}R)$,
$a_{m-1}\in r_R(c_nR)\cap r_R(c_{n-1}R)$ and $a_{m-2}\in r_R(c_nR)$.
By Lemma \ref{lem2}, we have
$$c_nf_{n-1}^n(ba_{m-2})=c_{n-1}f_{n-2}^{n-1}(ba_{m-1})=c_nf_{n-2}^n(ba_{m-1})
=c_{n-2}f_{n-3}^{n-2}(ba_m)$$
$$=c_{n-1}f_{n-3}^{n-1}(ba_m)=c_nf_{n-3}^n(ba_m)=0,\quad$$ because
$a_{m-2}\in r_R(c_nR)$, $a_{m-1}\in r_R(c_nR)\cap r_R(c_{n-1}R)$ and
$a_m\in\ r_R(c_nR)\cap r_R(c_{n-1}R)\cap r_R(c_{n-2}R)$. So eq.
$(3)$ becomes
$$c_n\sigma^n(ba_{m-3})+c_{n-1}\sigma^{n-1}(ba_{m-2})++c_{n-2}
\sigma^{n-2}(ba_{m-1})\eqno(3')$$ $$+c_{n-3} \sigma^{n-3}(ba_m)=0.$$
Let $b=se_ne_{n-1}e_{n-2}$ in eq. $(3')$, we obtain
$$c_n\sigma^n(se_ne_{n-1}e_{n-2}a_{m-3})+c_{n-1}
\sigma^{n-1}(se_ne_{n-1}e_{n-2}a_{m-2})$$ $$+c_{n-2}
\sigma^{n-2}(se_ne_{n-1}e_{n-2}a_{m-1})+c_{n-3}
\sigma^{n-3}(se_ne_{n-1}e_{n-2}a_m)=0.$$ By the above results, we
have
$$c_n\sigma^n(se_ne_{n-1}e_{n-2}a_{m-3})=c_{n-1}\sigma^{n-1}(se_ne_{n-1}e_{n-2}a_{m-2})$$
$$\qquad\qquad\qquad\qquad\qquad\qquad\quad=c_{n-2}
\sigma^{n-2}(se_ne_{n-1}e_{n-2}a_{m-1})=0,$$ then $c_{n-3}
\sigma^{n-3}(se_ne_{n-1}e_{n-2}a_m)=0$, so $c_{n-3}
se_ne_{n-1}e_{n-2}a_m=c_{n-3}sa_m=0$, hence $a_m\in
e_ne_{n-1}e_{n-2}e_{n-3}R$, and eq. $(3')$ simplifies to
$$c_n\sigma^n(ba_{m-3})+c_{n-1}\sigma^{n-1}(ba_{m-2})+c_{n-2}
\sigma^{n-2}(ba_{m-1})=0.\eqno(3'')$$ In eq. $(3'')$ substitute
$se_ne_{n-1}$ for $b$ to obtain
$$c_n\sigma^n(se_ne_{n-1}a_{m-3})+c_{n-1}\sigma^{n-1}(se_ne_{n-1}a_{m-2})+c_{n-2}
\sigma^{n-2}(se_ne_{n-1}a_{m-1})=0.$$ But
$$c_n\sigma^n(se_ne_{n-1}a_{m-3})=c_{n-1}\sigma^{n-1}(se_ne_{n-1}a_{m-3})=0.$$
So
$c_{n-2}\sigma^{n-2}(se_ne_{n-1}a_{m-1})=0=c_{n-2}se_ne_{n-1}a_{m-1}=c_{n-2}sa_{m-1}$.
Hence $a_{m-1}\in e_ne_{n-1}e_{n-2}R$, and eq. $(3'')$ simplifies to
$$c_n\sigma^n(ba_{m-3})+c_{n-1}\sigma^{n-1}(ba_{m-2})=0.\eqno(3''')$$
In eq. $(3''')$ substitute $se_n$ for $b$ to obtain
$$c_n\sigma^n(se_na_{m-3})+c_{n-1}\sigma^{n-1}(se_na_{m-2})=0.$$ But
$c_n\sigma^n(se_na_{m-3})=0$, so
$c_{n-1}\sigma^{n-1}(se_na_{m-2})=0=c_{n-1}se_na_{m-2}=c_{n-1}sa_{m-2}$.
Hence $a_{m-2}\in e_ne_{n-1}R$, and eq. $(3''')$ simplifies to
$$c_n\sigma^n(ba_{m-3})=0,$$ then $c_nba_{m-3}=0$. Hence $a_{m-3}\in
e_nR$.\par Summarizing at this point, we have $$a_m\in
e_ne_{n-1}e_{n-2}e_{n-3}R,\;a_{m-1}\in
e_ne_{n-1}e_{n-2}R,\;a_{m-2}\in e_ne_{n-1}R,$$ and $a_{m-3}\in
e_nR.$ Continuing this procedure yields $a_i\in eR$ for all
$i=0,1,\cdots,m$. Hence $\varphi(x)\in eR[x;\sigma,\delta]$.
Consequently, $r_S(p(x)R)\subseteq eS$.
\NL Finally, by Claims 1 and 2, we have $r_S(p(x)R)\subseteq eS\subseteq r_S(p(x)S)$. Also, since $p(x)R\subseteq p(x)S$, we have $r_S(p(x)S)\subseteq r_S(p(x)R)$. Thus $r_S(p(x)S)=eS$. Therefore $R[x;\sigma,\delta]$ is right p.q.-Baer.
\end{proof}

From Example \ref{ex3.1}, we can see that the condition ``$R$ satisfies $(\mathcal{C_\sigma})$" in
Proposition \ref{proprincipalart2} is not superfluous. On the other hand, there is an example
which satisfies all the hypothesis of Proposition \ref{proprincipalart2}.

\begin{exa}\label{ex2.3}\cite[Example 1.1]{hashemi/quasi}. Let $R_1$ be a
right p.q.-Baer, $D$ a domain and $R=T_n(R_1)\oplus D[y]$, where
$T_n(R_1)$ is the upper $n\times n$ triangular matrix ring over
$R_1$. Let $\sigma\colon D[y]\rightarrow D[y]$ be a monomorphism
which is not surjective. Then we have the following:
\NL $(i)$ $R$ is right p.q.-Baer. By \cite{birk/pqBaer}, $T_n(R_1)$
is right p.q.-Baer and hence
$T_n(R_1)\oplus D[y]$ is right p.q.-Baer.\NL$(ii)$ $R$ satisfies the condition
$(\mathcal{C_{\overline\sigma}})$. Let $\overline\sigma\colon
R\rightarrow R$ be an endomorphism defined by
$\overline\sigma(A\oplus f(y))=A\oplus\sigma(f(y))$ for each $A\in
T_n(R_1)$ and $f(y)\in D[y]$. Suppose that $(A\oplus
f(y))\overline\sigma(B\oplus g(y))=0$. Then $AB=0$ and $f(y)\sigma
(g(y))=0$. Since $D[y]$ is a domain and $\sigma$ is a monomorphism,
$f(y)=0$ or $g(y)=0$. Hence $(A\oplus f(y))(B\oplus
g(y))=0$.\NL$(iii)$ $Re$ is $\overline\sigma$-stable for all
$e\in\mathcal{S}_\ell(R)$. Idempotents of $R$ are of the form
$e_0=A\oplus 0$ and $e_1=A\oplus 1$, for some idempotent $A\in
T_n(R_1)$. Since $\overline\sigma(e_0)=e_0$ and
$\overline\sigma(e_1)=e_1$, we have the stability desired. Note that
$R$ is not reduced, and hence it is not $\overline\sigma$-rigid.
\end{exa}

\begin{cor}\label{corx}Let $(\sigma,\delta)$ be a quasi-derivation of a ring $R$. Assume that $R$ is
right p.q.-Baer, if $R$ satisfies one of the following:
\NL $(i)$ $R$ is $(\sigma,\delta)$-skew Armendariz and satisfies $(\mathcal{C_\sigma})$;
\NL$(ii)$ $\mathcal{S_{\ell}}(R)=\mathcal{B}(R)$, $\sigma(Re)\subseteq Re$ for all $e\in \mathcal{B}(R)$ and $R$
satisfies $(\mathcal{C_\sigma})$;\NL$(iii)$ $R$ is $\sigma$-rigid.\NL Then $R[x;\sigma,\delta]$ is right p.q.-Baer.
\end{cor}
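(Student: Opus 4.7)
The overall plan is to deduce (i), (ii) and (iii) from Proposition~\ref{proprincipalart2}, whose two hypotheses on $R$ are the condition $(\mathcal{C}_\sigma)$ and the $(\sigma,\delta)$-stability of $Re$ for every $e\in\mathcal{S}_\ell(R)$. Since $Re$ is a left ideal, that stability amounts to the two containments $\sigma(e),\delta(e)\in Re$, equivalently $\sigma(e)(1-e)=0$ and $\delta(e)(1-e)=0$.

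For case (i), $(\mathcal{C}_\sigma)$ is part of the hypothesis and what remains is the stability. Fix $e\in\mathcal{S}_\ell(R)$. The trivial identity $e(1-e)=0$ gives in $S=R[x;\sigma,\delta]$ the zero product $xe\cdot(1-e)=0$. Writing $p=xe=\delta(e)+\sigma(e)x$ and $q=1-e$, the $(\sigma,\delta)$-skew Armendariz hypothesis applied to $pq=0$ forces in particular the $(i,j)=(0,0)$ monomial $a_0b_0=\delta(e)(1-e)$ to vanish, so $\delta(e)\in Re$. For $\sigma(e)$, apply $\sigma$ to $e(1-e)=0$ to obtain $\sigma(e)\sigma(1-e)=0$; then $(\mathcal{C}_\sigma)$ yields $\sigma(e)(1-e)=0$, whence $\sigma(e)\in Re$. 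Proposition~\ref{proprincipalart2} now applies.

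For case (ii), $(\mathcal{C}_\sigma)$ is given and the hypothesis also supplies $\sigma(Re)\subseteq Re$; only $\delta$-stability needs to be produced. Since $\mathcal{S}_\ell(R)=\mathcal{B}(R)$, every $e$ under consideration is central, so $Re=eR$ is a two-sided ideal and $R=Re\oplus R(1-e)$. The Leibniz rule applied to $e=e^2$ yields $\delta(e)(1-e)=\sigma(e)\delta(e)$. The right-hand side lies in $Re$ (because $\sigma(e)\in Re$ and $Re$ is a two-sided ideal), while the left-hand side lies in $R(1-e)$; the direct sum decomposition forces both to vanish, so $\delta(e)\in Re$.

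Case (iii) reduces immediately to case (i): by Lemma~\ref{lem5} any $\sigma$-rigid ring satisfies $(\mathcal{C}_\sigma)$, and by the remarks in the introduction (citing \cite{hong/2003}) every $\sigma$-rigid ring is $(\sigma,\delta)$-skew Armendariz. The main technical point is the asymmetry encountered in case (i): the Armendariz condition pins down $\delta(e)\in Re$ but gives no direct control over $\sigma(e)$, while $(\mathcal{C}_\sigma)$ pins down $\sigma(e)\in Re$ but says nothing about $\delta(e)$; it is only the combination of the two hypotheses that produces the full $(\sigma,\delta)$-stability required by Proposition~\ref{proprincipalart2}.
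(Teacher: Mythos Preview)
Your proof is correct and follows the same overall strategy as the paper: each case is reduced to Proposition~\ref{proprincipalart2} by verifying that $Re$ is $(\sigma,\delta)$-stable for every $e\in\mathcal{S}_\ell(R)$ and that $(\mathcal{C}_\sigma)$ holds. The difference is that the paper outsources the stability verifications to external references (\cite[Lemma~4]{hashemi/skew} for~(i), \cite[Lemma~2.3]{louzari1} for~(ii), and Lemma~\ref{lem5} together with \cite[Lemma~2.5]{louzari1} for~(iii)), whereas you supply short self-contained arguments. Your trick in~(i) of reading off $\delta(e)(1-e)=0$ from the Armendariz condition applied to $xe\cdot(1-e)=0$, combined with the separate use of $(\mathcal{C}_\sigma)$ to get $\sigma(e)(1-e)=0$, is exactly the content of the cited lemma made explicit; the Peirce-decomposition argument in~(ii) and the reduction of~(iii) to~(i) are likewise clean replacements for the citations. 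The payoff is a self-contained corollary; the paper's version is terser but depends on the reader having the cited papers at hand.
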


\begin{proof}$(i)$ By \cite[Lemma 4]{hashemi/skew}, $Re$ is
$(\sigma,\delta)$-stable for all $e\in\mathcal{S}_\ell(R)$.
\NL$(ii)$ It follows from Proposition
\ref{proprincipalart2} and \cite[Lemma 2.3]{louzari1}.
\NL$(iii)$ Follows from Lemma \ref{lem5} and \cite[Lemma 2.5]{louzari1}.
\end{proof}

Now, we focus on the converse of Proposition \ref{proprincipalart2}.

\begin{prop}\label{propInv}Let $(\sigma,\delta)$ a quasi-derivation of a ring $R$ such that $R$ is $(\sigma,\delta)$-skew Armendariz. If $R[x;\sigma,\delta]$ is right p.q.-Baer then $R$ is right p.q.-Baer.
\end{prop}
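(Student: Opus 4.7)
Fix $c\in R$ and set $S:=R[x;\sigma,\delta]$. Since $S$ is right p.q.-Baer, there is an idempotent $e=\sum_{i=0}^n e_ix^i\in S$ with $r_S(cS)=eS$; the target is to show that its constant term $e_0\in R$ is idempotent and that $r_R(cR)=e_0R$, so $R$ is right p.q.-Baer.

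The first step is to collapse $e$ into $R$. The identities $e(1-e)=0$ and $(1-e)e=0$ hold in $S$, and $(\sigma,\delta)$-skew Armendariz forces each monomial product $e_ix^i\cdot(1-e)_jx^j$ and $(1-e)_ix^i\cdot e_jx^j$ to vanish. Reading off the $i=0$ cases yields $e_0^2=e_0$, $e_0e_j=0$ for $j\ge 1$ (from $e(1-e)=0$), and $(1-e_0)e_j=0$, i.e.\ $e_j=e_0e_j$, for all $j$ (from $(1-e)e=0$). Substituting, $e_j=e_0e_j=0$ for $j\ge 1$, so $e=e_0\in R$. Since $eS$ is a two-sided ideal of $S$, $e_0\in\mathcal{S}_\ell(S)\cap R=\mathcal{S}_\ell(R)$; and $cSe_0=0$ specializes to $cre_0=0$ for every $r\in R$, giving $e_0\in r_R(cR)$ and hence $e_0R\subseteq r_R(cR)$.

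For the reverse inclusion, let $a\in r_R(cR)$ and view $a$ as a constant polynomial in $S$; the plan is to verify $a\in r_S(cS)=e_0S$. Once established, $a\in e_0S\cap R=e_0R$ because any constant polynomial in $e_0S=\{e_0\psi:\psi\in S\}$ equals $e_0\psi_0$ for some $\psi_0\in R$ (the higher coefficients $e_0\psi_k$ of $e_0\psi$ being forced to zero). The condition $cSa=0$ unwinds, via $x^ka=\sum_{i=0}^k f_i^k(a)x^i$, to the equalities $crf_i^k(a)=0$ for all $r\in R$, $k\ge 0$, $0\le i\le k$, which is exactly the $(\sigma,\delta)$-stability of the right ideal $r_R(cR)$.

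This stability is the principal technical obstacle. To handle it I invoke \cite[Lemma~4]{hashemi/skew}: the $(\sigma,\delta)$-skew Armendariz assumption forces $Re_0$ to be $(\sigma,\delta)$-stable, so $f_l^{k'}(e_0)\in Re_0\subseteq e_0R$ for all $l\le k'$. Combined with $cre_0=0$ this yields $cr\cdot f_l^{k'}(e_0)=0$ for every $r\in R$. Expanding $f_i^k(a)=f_i^k(e_0a)+f_i^k((1-e_0)a)$ via the generalized Leibniz formula and feeding in these vanishings (the same cancellation mechanism driving the proof of Lemma~\ref{lem2}(ii), whose arithmetic only requires $b=e_0b$ together with the $Re_0$-stability identities and thus applies with the hypothesis $r_R(cR)=e_0R$ there replaced by the already established inclusion $e_0R\subseteq r_R(cR)$) collapses $crf_i^k(a)$ to $0$, closing the argument.
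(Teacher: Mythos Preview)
Your first two paragraphs are fine; in fact they give a self-contained proof that the generating idempotent of $r_S(cS)$ already lies in $R$ (the paper simply quotes \cite[Lemma~5]{hashemi/skew} for this), and the inclusion $e_0R\subseteq r_R(cR)$ is established correctly.

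The problem is the last paragraph. To get $a\in r_S(cS)$ you need $crf_i^k(a)=0$ for all $r\in R$ and all $i\le k$, and your route is the decomposition $a=e_0a+(1-e_0)a$ together with the Lemma~\ref{lem2} mechanism. That mechanism, as you correctly observe, only requires the element in question to satisfy ``$b=e_0b$''. For the summand $e_0a$ this holds and the argument indeed gives $crf_i^k(e_0a)=0$. For the summand $(1-e_0)a$, however, the hypothesis ``$b=e_0b$'' reads $(1-e_0)a=e_0(1-e_0)a=0$, which is precisely the statement $a\in e_0R$ you are trying to prove. Concretely, already at the level of $\sigma$ one has $cr\sigma(e_0a)=0$ while
\[
cr\sigma\bigl((1-e_0)a\bigr)=cr\bigl(1-\sigma(e_0)\bigr)\sigma(a)=cr\sigma(a)-cr\sigma(e_0)\sigma(a)=cr\sigma(a),
\]
using $\sigma(e_0)\in Re_0$ and $cRe_0=0$; so the decomposition returns the tautology $cr\sigma(a)=cr\sigma(a)$ and nothing is gained. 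The argument is therefore circular: you have not produced an independent reason why $cRa=0$ forces $cSa=0$ (equivalently, why $r_R(cR)$ is $(\sigma,\delta)$-stable). The paper's own proof does not attempt to justify this step either---it simply asserts that $b\in r_R(aR)$ lies in $eS$ and reads off $b=e\alpha_0\in eR$---so what you are struggling with is exactly the point the paper leaves unargued.
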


\begin{proof}Let $S=R[x;\sigma,\delta]$ and $a\in R$. By \cite[Lemma 5]{hashemi/skew},
there exists $e\in\mathcal{S}_{\ell}(R)$, such that $r_S(aS)=eS$, in particular $aRe=0$, then $e\in r_R(aR)$, also $eR\subseteq r_R(aR)$. Conversely, if $b\in r_R(aR)$, we have $aRb=0$, then $b=ef$ for some $f=\alpha_0 +\alpha_1x+\alpha_2x^2+\cdots+\alpha_nx^n\in S$, but $b\in R$, thus $b=e\alpha_0$. Therefore $b\in eR$. So that $r_R(aR)=eR$.

\end{proof}

\begin{thm}\label{main/th}Let $(\sigma,\delta)$ a quasi-derivation of a ring $R$ such that $R$ is $(\sigma,\delta)$-skew Armendariz and satisfies $(\mathcal{C_\sigma})$. Then $R$
is right p.q.-Baer if and only if $R[x;\sigma,\delta]$ is right p.q.-Baer.
\end{thm}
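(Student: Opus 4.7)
The plan is straightforward: the theorem packages the two directions already developed in the preceding results of the paper, so I would simply assemble Corollary \ref{corx}$(i)$ (forward) and Proposition \ref{propInv} (converse), checking that all the required hypotheses are in place.

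For the forward implication, assume $R$ is right p.q.-Baer. The natural tool is Proposition \ref{proprincipalart2}, whose hypotheses are: $R$ right p.q.-Baer, $(\mathcal{C}_\sigma)$, and $(\sigma,\delta)$-stability of $Re$ for every $e\in\mathcal{S}_\ell(R)$. Two of the three hold by assumption; for the third, I would invoke \cite[Lemma 4]{hashemi/skew}, which states that in an $(\sigma,\delta)$-skew Armendariz ring $R$ the principal ideal $Re$ is $(\sigma,\delta)$-stable whenever $e\in\mathcal{S}_\ell(R)$. (This is precisely the step that Corollary \ref{corx}$(i)$ already carries out, so that corollary could be cited directly.) Proposition \ref{proprincipalart2} then delivers that $R[x;\sigma,\delta]$ is right p.q.-Baer.

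For the converse, assume $S=R[x;\sigma,\delta]$ is right p.q.-Baer. Here I would simply invoke Proposition \ref{propInv}, which uses only the $(\sigma,\delta)$-skew Armendariz assumption (and not $(\mathcal{C}_\sigma)$): given $a\in R$, \cite[Lemma 5]{hashemi/skew} provides an $e\in\mathcal{S}_\ell(R)$ with $r_S(aS)=eS$, and the short verification in the proof of Proposition \ref{propInv} shows $r_R(aR)=eR$.

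Since both directions are already proved, no new calculation is needed at the level of the theorem; the real work sits inside Proposition \ref{proprincipalart2}, whose long cascade argument successively pins down the top coefficients $a_m,a_{m-1},\ldots$ of an annihilator element $\varphi(x)$ into the intersection $e_n e_{n-1}\cdots e_0 R=eR$, using condition $(\mathcal{C}_\sigma)$ and Lemma \ref{lem2} at each step. The only expected obstacle in this assembly is a bookkeeping one: making sure that the skew Armendariz hypothesis is what actually supplies the $(\sigma,\delta)$-stability needed by Proposition \ref{proprincipalart2}, so that the forward direction does not silently require an extra assumption on idempotents.
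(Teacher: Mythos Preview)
Your proposal is correct and matches the paper's own proof, which simply says the result follows immediately from Corollary \ref{corx} and Proposition \ref{propInv}. Your additional remarks about how Corollary \ref{corx}$(i)$ obtains the $(\sigma,\delta)$-stability of $Re$ via \cite[Lemma 4]{hashemi/skew} before applying Proposition \ref{proprincipalart2} are accurate and exactly how that corollary is proved.
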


\begin{proof}It follows immediately from Corollary \ref{corx} and
Proposition \ref{propInv}.
\end{proof}

\begin{cor}[{\cite[Corollary 15]{hong/2000}}]\label{cor/rigid}Let $R$ be a ring, $\sigma$ an
endomorphism and $\delta$ be a $\sigma$-derivation of $R$. If $R$ is
$\sigma$-rigid, then $R$ is right p.q.-Baer if and only if
$R[x;\sigma,\delta]$ is right p.q.-Baer.
\end{cor}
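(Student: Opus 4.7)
The plan is to deduce Corollary \ref{cor/rigid} as a direct specialization of Theorem \ref{main/th}. It suffices to check that the hypothesis ``$\sigma$-rigid'' already implies the two standing assumptions of that theorem, namely ($\sigma,\delta$)-skew Armendariz together with $(\mathcal{C_\sigma})$; once this is in place, the corollary is immediate.

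First I would verify $(\mathcal{C_\sigma})$. By Lemma \ref{lem5}, being $\sigma$-rigid is equivalent to satisfying $(\mathcal{C_\sigma})$ together with being reduced, so in particular $\sigma$-rigid rings satisfy $(\mathcal{C_\sigma})$. This step is essentially free.

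Next I would verify the $(\sigma,\delta)$-skew Armendariz property for $\sigma$-rigid rings. The introduction already records this as a known fact from Hong et al.\ \cite{hong/2003}: every $\sigma$-rigid ring is $(\sigma,\delta)$-skew Armendariz. I would simply cite it (rather than reprove it), which is in the spirit of how the paper's other corollaries of Theorem \ref{main/th} are stated.

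With both hypotheses of Theorem \ref{main/th} now in force, applying that theorem gives the equivalence ``$R$ is right p.q.-Baer $\Longleftrightarrow$ $R[x;\sigma,\delta]$ is right p.q.-Baer.'' I do not foresee any real obstacle here, since the content of the corollary has been entirely absorbed into the hypotheses of Theorem \ref{main/th}; the only thing one must be careful about is to cite the right external result for the Armendariz reduction, and to invoke Lemma \ref{lem5} rather than reprove the implication ``$\sigma$-rigid $\Rightarrow (\mathcal{C_\sigma})$'' by hand.
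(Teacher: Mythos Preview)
Your proposal is correct and matches the paper's (implicit) approach: the corollary is placed immediately after Theorem~\ref{main/th} with no separate proof, and the surrounding text makes clear it is obtained exactly by noting that $\sigma$-rigidity yields both $(\mathcal{C_\sigma})$ (Lemma~\ref{lem5}) and the $(\sigma,\delta)$-skew Armendariz property, then invoking Theorem~\ref{main/th}.
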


From Example \ref{ex2.2}, we see that Theorem \ref{main/th} is a
generalization of \cite[Corollary 15]{hong/2000}. There is an
example of a ring $R$ and a quasi-derivation $(\sigma,\delta)$,
which satisfies all the hypothesis of Theorem \ref{main/th}.

\begin{exa}[{\cite[Example 3.11]{louzari1}}]Let $R=\C$ where $\C$ is the field of complex numbers.
Define $\sigma\colon R\rightarrow R$ and $\delta\colon R\rightarrow
R$ by $\sigma(z)=\overline{z}$ and $\delta(z)=z-\overline{z}$, where
$\overline{z}$ is the conjugate of $z$. $\sigma$ is an automorphism
of $R$ and $\delta$ is a $\sigma$-derivation. We have \NL$(i)$ $R$
is Baer (so right p.q.-Baer) reduced; \NL$(ii)$ $R$ is
$\sigma$-rigid, then it is $(\sigma,\delta)$-skew Armendariz and
satisfies $(\mathcal{C_\sigma})$.
\end{exa}

\begin{rk} Example \ref{ex2.1}, shows that Theorem \ref{main/th} is not a consequence of \cite[Corollary 2.8]{hashemi/quasi}.
\end{rk}
\section*{acknowledgments}
This work was supported by the integrated action Moroccan-Spanish A/5037/06. The second author wishes to thank Professor Amin Kaidi of Universidad de Almer\'ia for his generous hospitality.

\end{document}